\documentclass[11pt,a4paper]{article}
\usepackage{amsmath,amsthm,amsfonts,amssymb}
\usepackage{array,fullpage, epic, eepic}
\usepackage{graphicx}
\usepackage{color}
\usepackage{float}
\usepackage{algorithm}
\usepackage{algorithmic}

\theoremstyle{plain}

\newtheorem{lem}{Lemma}[section]

\newtheorem{theo}[lem]{Theorem}

\newtheorem{cor}[lem]{Corollary}

\theoremstyle{definition}

\def\DJ{{\hbox{D\kern-.8em\raise.15ex\hbox{--}\kern.35em}}}

\title{Wythoff-Fibonacci Sequences and a Perturbed Greedy Almost-involution 
}
\author{Luis Mart\'\i nez$^{1}$, Iker Malaina.
\\
University of the Basque Country (EHU), Department of Mathematics,\\ 48080 Bilbao, Spain \\
E-mail: \ luis.martinez@ehu.eus,\ iker.malaina@ehu.eus \\
$^1$ Corresponding author
}
\date{}

\begin{document}

\maketitle

\begin{abstract} 
We introduce the lower and upper Wythoff-Fibonacci sequences, obtained from the classical Wythoff sequences by a Fibonacci correction. Specifically, if we put $$\epsilon(j)=\begin{cases}(-1)^k, & \text{if }j=F_k\text{ for some }k\\ 0, & \text{in other case}\end{cases},$$ where $F_k$ is the $k$-th Fibonacci number, then we define the general terms of the lower and upper Wythoff-Fibonacci sequences by $$LWF(n)=\begin{cases} 1, & \text{if }n=1,\\
3, & \text{if }n=2,\\
a(n)+\epsilon(n), & \text{if }n\geq 3.\end{cases}$$ and $$UWF(n)=\begin{cases} 2, & \text{if }n=1,\\
b(n)+\epsilon(n), & \text{if }n\geq 2,\end{cases}$$ respectively. We show that these sequences partition the set of natural numbers and use them to give an explicit formula for a sequence $q^{\star}_j$, defined from a greedy construction studied by the first author and his coauthors in a previous paper, but with the additional condition that $q^{\star}_1=3$, instead of being defined by the greedy rule. This sequence is a permutation of the set of non-negative integers and has the property that every integer appears exactly once in the sequence of differences $q^{\star}_j-j$. We prove that $q^{\star}_{q^{\star}_j}=j\ \forall j\geq 5$, so that $q^{\star}_j$ is an almost-involution. We also give another greedy algorithm generating $q^{\star}_j$.
\end{abstract}

{\bf Keywords:} Wythoff lower sequence, Wythoff upper sequence, Fibonacci sequence, orthogonal array, greedy sequence.\newline

{\bf MSC2020:} 11B39, 05B15

\section{Introduction}\label{intr}

Combinatorics has many applications in both pure and applied mathematics; in pure mathematics, for example, in group theory (\cite{s}), and in applied mathematics, for instance, in the computational design of vaccines (\cite{mmlmmf}).

Fibonacci numbers have also applications in combinatorics, including design theory (\cite{ward}), the theory of latin squares (\cite{bch}), or coding theory (\cite{bmm}).

In relation with the interplay between Fibonacci numbers and combinatorics, the first author, jointly with other coauthors, introduced in \cite{bmmmv} a greedy algorithm which produce an infinite array, in order to search infinite orthogonal arrays with a regular automorphisms group isomorphic to the additive group of $\mathbb Z$.

The algorithm generating the array is:

\begin{algorithm}[H]
\caption{}\label{alg}
\begin{algorithmic}[1]
\REQUIRE $n,m\in\mathbb N$
\ENSURE $Q^{[m,n]}\in M_{(m+1)\times n}(\mathbb Z^{\geq 0})$
\FOR{$i\in\{0,\dots,m\}$}
\FOR{$j\in\{0,\dots,n-1\}$}
\STATE $S=\{q^{[m,n]}_{i,k}+q^{[m,n]}_{l,j}-q^{[m,n]}_{l,k}\mid 0\leq l<i,0\leq k<j\}$
\STATE $q^{[m,n]}_{i,j}=\text{mex}(S\cap\mathbb Z^{\geq 0})$
\ENDFOR
\ENDFOR
\RETURN $Q^{[m,n]}$
\end{algorithmic}
\end{algorithm}

The matrices $Q^{[m,n]}$ defined for given $m$ and $n$ are monotone with respect to the inclusion, and therefore let construct an array with an infinite number of rows and columns.

In particular, the authors gave explicitely the values of the third row, with general term \begin{equation}\label{gvqj}q_{j}=\begin{cases} 0, & \text{if }j=0\\
\lfloor \varphi j\rfloor +1, & \text{if }j\in A\\ \lfloor (\varphi -1)j\rfloor, & \text{ if }j\in B\end{cases},\end{equation} where $A$ and $B$ are the ranges of the lower and upper Wythoff sequences, respectively. They observed also that the sequence $q_j$, whose first terms are $0, 2, 1, 5, 7, 3, 10, 4, 13, 15$, corresponds with sequence A002251 in Sloane's on-line encyclopedia of integer sequences \cite{sol}, obtained by swapping $a(k)$ and $b(k)$ for all $k\geq 1$, and that when $q_j+1$ is taken we obtain the sequence studied on page 76 in \cite{berconguy}, which is also studied by  A. Shapovalov (\cite{shap}) and by Venkatachala (\cite{venk}) (sequence A019444 in Sloane's database).

In this paper we introduce a perturbation in the greedy sequence that we have described, producing a sequence $q^{\star}_j$ in which we force that $q^{\star}_1=3$, overimposing the greedy rule for this value. We give an explicit formula for the values of $q^{\star}_j$. Unlike in (\ref {gvqj}), where the casuistic depends on the partition induced by the classical lower and upper Wythoff sequences,we introduced for the study of this new sequence a variation of these Wythoff sequences, that we call lower and upper Wythoff-Fibonacci sequences, based in the classical ones having into account a Fibonacci correction.

In contrast to the case of $q_j$, which is an involution, the sequence $q^{\star}_j$ satisfies $q^{\star}_{q^{\star}_j}=j$ only when $j=0$ or $j\geq 5$, and we call it an almost-involution.

\section{Wythoff-Fibonacci sequences}

Let us stablish some notation:\newline

$\mathbb N=\{1,2,3,\dots\}$ will denote the set of natural numbers and $\mathbb Z^{\geq 0}=\{0,1,2,\dots\}$ the set of non-negative integers.

More generally, if $n\in\mathbb N$, $\mathbb Z^{\geq n}=\{n,n+1,n+2,\dots\}$ will denote the set of natural numbers greater than or equal to $n$.

$\varphi=\frac{1+\sqrt{5}}{2}$ will denote the golden ratio.

For $n\in\mathbb N$, $a(n)=\lfloor\varphi n\rfloor$ and $b(n)=\lfloor\varphi^2 n\rfloor$ will denote the general terms of the lower Wythoff sequence and the upper Wythoff sequence, respectively.

For $n\in\mathbb N$, $F_n$ will denote the $n$-th Fibonacci number.

For $m\in\mathbb N$, let $\mathfrak F_m=\{F_k\mid k\in\mathbb Z^{\geq m}\}$. In particular, we will denote by $\mathfrak F$ to $\mathfrak F_1$, that is, to the set of Fibonacci numbers.

We define also, for $j\geq 2$, $$\epsilon(j)=\begin{cases}(-1)^k, & \text{if }j=F_k\text{ for some }k\\ 0, & \text{in other case}\end{cases}.$$ Observe that, since $j\geq 2$, the $k$ in the definition is unique.

If $A\subseteq\mathbb Z^{\geq 0}$ we will denote, as usual in the literature, by $\text{mex}(A)$ (minimum excluded value) the number $\min (\mathbb Z^{\geq 0}-A)$, that is, the minimum non-negative integer that is not in $A$.

Now we will introduce two new sequences related both to Wythoff sequences and the Fibonacci sequence, which we will call the lower and upper Wythoff-Fibonacci sequences LWF and UWF, respectively, defined for $n\in\mathbb N$ by $$LWF(n)=\begin{cases} 1, & \text{if }n=1,\\
3, & \text{if }n=2,\\
a(n)+\epsilon(n), & \text{if }n\geq 3.\end{cases}$$ and $$UWF(n)=\begin{cases} 2, & \text{if }n=1,\\
b(n)+\epsilon(n), & \text{if }n\geq 2.\end{cases}$$

We will denote by $A^{\star}$ and $B^{\star}$ the ranges of $LWF$ and $UWF$, that is, $$A^{\star}=\{1, 3, 5, 6, 7, 9, 11, 13, 14, 16,\dots\}$$ and $$B^{\star}=\{2, 4, 8, 10, 12, 15, 18, 21, 23, 26,\dots\}.$$

\begin{theo}\label{asbsp} The sets $A^{\star}$ and $B^{\star}$ form a partition of the set of natural numbers.
\end{theo}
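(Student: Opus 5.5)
The plan is to reduce the statement to the classical fact (Beatty's theorem) that the ranges of $a$ and $b$ partition $\mathbb N$. I would then show that passing from $(a,b)$ to $(LWF,UWF)$ only exchanges values within disjoint pairs of consecutive integers. Under such exchanges the combined multiset of values $\{LWF(n)\}\cup\{UWF(n)\}$ equals $\{a(n)\}\cup\{b(n)\}$, and the partition property survives.

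First, the small cases. $LWF(1)=1=a(1)$, $LWF(2)=3=a(2)$ and $UWF(1)=2=b(1)$, so nothing changes at $n\le 2$ for $a$ or at $n=1$ for $b$. For every other index $n$ that is not a Fibonacci number, $\epsilon(n)=0$, so that value is unchanged as well.

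Second, the key lemma on Fibonacci arguments. For $k\ge 2$,
$$a(F_k)=\begin{cases}F_{k+1}-1,& k \text{ even},\\ F_{k+1},& k\text{ odd},\end{cases}$$
and, since $b(n)=a(n)+n$,
$$b(F_k)=\begin{cases}F_{k+2}-1,& k \text{ even},\\ F_{k+2},& k\text{ odd}.\end{cases}$$
I would prove this from Binet's formula. Write $\varphi F_k=F_{k+1}-\psi^{k}$ with $\psi=(1-\sqrt5)/2$, so $-\psi^k=-(-1)^k|\psi|^k$ with $0<|\psi|^k<1$ for $k\ge 2$. This is the only genuinely computational step, and it is short.

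Third, combine these with $\epsilon$. For $n=F_k\ge 3$, i.e. $k\ge 4$:
\begin{itemize}
\item $k$ even: $a(F_k)=F_{k+1}-1$ moves to $F_{k+1}$. The index $F_{k-1}$ (with $k-1$ odd, $F_{k-1}\ge 2$) has $b(F_{k-1})=F_{k+1}$, which moves to $F_{k+1}-1$.
\item $k$ odd: $a(F_k)=F_{k+1}$ moves to $F_{k+1}-1$, while $b(F_{k-1})=F_{k+1}-1$ moves to $F_{k+1}$.
\end{itemize}
So each perturbation is a transposition of the two values $F_{m}-1,F_m$ (with $m=k+1\ge 5$) between the positions $a(F_{m-1})$ and $b(F_{m-2})$. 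The map $k\mapsto k-1$ is a bijection from the perturbed $a$-indices $\{F_k: k\ge 4\}$ onto the perturbed $b$-indices $\{F_j: j\ge 3\}=\{n\ge 2: n\in\mathfrak F\}$. Hence every index where $\epsilon\neq 0$ is involved in exactly one transposition.

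Finally, the pairs $\{F_m-1,F_m\}$ for $m\ge 5$ are pairwise disjoint, because consecutive Fibonacci numbers from $F_5$ on differ by at least $2$. Therefore the composite of these transpositions is a well-defined permutation of $\mathbb N$ carrying the classical partition $\{a(\mathbb N),b(\mathbb N)\}$ onto $\{A^\star,B^\star\}$. Injectivity of $LWF$ and $UWF$ and disjointness of their ranges are inherited, so $A^\star\sqcup B^\star=\mathbb N$.

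The main care point is bookkeeping: making sure no index is perturbed twice and no value is hit twice. This is exactly where the restriction $n\ge 3$ for $LWF$ matters, since it avoids the ambiguity $F_1=F_2=1$. The explicit values of $LWF$ at $1$ and $2$ are what make the small cases fit.
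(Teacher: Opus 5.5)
Your proof is correct and is essentially the paper's argument. The paper proves the same identities $a(F_k)+(-1)^k=b(F_{k-1})$ and $b(F_k)+(-1)^k=a(F_{k+1})$ via Binet's formula. It then rewrites $\{LWF(n)\}_{n\ge 3}\cup\{UWF(n)\}_{n\ge 3}$ as the four sets $S_1,\dots,S_4$ of classical Wythoff values and concludes by Beatty's theorem; this is exactly your swap of values between the lower index $F_{m-1}$ and the upper index $F_{m-2}$, stated as a set decomposition instead of as disjoint transpositions.
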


We will use a lemma:

\begin{lem} If $k\in\mathbb Z^{\geq 2}$, then $$a(F_k)+(-1)^k=b(F_{k-1})$$ and $$b(F_k)+(-1)^k=a(F_{k+1}).$$
\end{lem}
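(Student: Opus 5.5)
The plan is to reduce both identities to one explicit formula for $a(F_k)$, obtained from Binet's formula, and then use $b(n)=a(n)+n$, which holds because $\varphi^2=\varphi+1$.

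\textbf{Step 1: the formula for $a(F_k)$.} Let $\psi=(1-\sqrt5)/2=-1/\varphi$. From $F_m=(\varphi^m-\psi^m)/\sqrt5$ one gets the standard identity
$$F_{m+1}-\varphi F_m=\psi^m \qquad (m\ge 1).$$
Indeed, the left side satisfies the Fibonacci recurrence in $m$, and it equals $1-\varphi=\psi$ at $m=1$ and $2-\varphi=\psi^2$ at $m=2$. Hence $\varphi F_m=F_{m+1}-\psi^m$.

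Since $0<|\psi|^m<1$ for $m\ge1$, we can read off the floor. If $m$ is odd, then $-\psi^m\in(0,1)$, so $a(F_m)=F_{m+1}$. If $m$ is even, then $-\psi^m\in(-1,0)$, so $a(F_m)=F_{m+1}-1$. Writing $[P]$ for the indicator of a statement $P$, this is
$$a(F_m)=F_{m+1}-[m \text{ even}], \qquad b(F_m)=a(F_m)+F_m=F_{m+2}-[m\text{ even}].$$

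\textbf{Step 2: the first identity.} Take $k\ge2$.
\begin{itemize}
\item If $k$ is even, then $a(F_k)+(-1)^k=F_{k+1}-1+1=F_{k+1}$.
\item If $k$ is odd, then $a(F_k)+(-1)^k=F_{k+1}-1$.
\end{itemize}
In both cases $a(F_k)+(-1)^k=F_{k+1}-[k\text{ odd}]$. On the other side, Step 1 with $m=k-1\ge1$ gives $b(F_{k-1})=F_{k+1}-[k-1\text{ even}]=F_{k+1}-[k\text{ odd}]$. The two sides agree.

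\textbf{Step 3: the second identity.} Using $b(F_k)=a(F_k)+F_k$ and Step 2,
$$b(F_k)+(-1)^k=F_{k+1}-[k\text{ odd}]+F_k=F_{k+2}-[k\text{ odd}].$$
Step 1 with $m=k+1$ gives $a(F_{k+1})=F_{k+2}-[k+1\text{ even}]=F_{k+2}-[k\text{ odd}]$, which is the same.

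\textbf{Main obstacle.} The only delicate point is Step 1 at small indices. We need $|\psi|^m<1$ and the correct sign of $\psi^m$ for every $m\ge1$, including $m=1$, which arises when $k=2$ in the first identity; there $a(1)=1=F_2$ should be checked directly. We also need the base values $F_1=F_2=1$ to be consistent with the identity $F_{m+1}-\varphi F_m=\psi^m$. Once the parity formula for $a(F_m)$ is established, everything else is bookkeeping with the indicator $[k\text{ odd}]$.
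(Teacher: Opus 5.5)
Your proposal is correct and follows essentially the same route as the paper. Both derive $\varphi F_m = F_{m+1}-\psi^m$: the paper gets it directly from Binet's formula, while you check it via the Fibonacci recurrence and two initial values. Both then read off $a(F_m)=F_{m+1}-[m\text{ even}]$ from the sign of $\psi^m$, and use $b(n)=a(n)+n$ to get $b(F_m)=F_{m+2}-[m\text{ even}]$. From there both identities are parity bookkeeping, which your indicator notation spells out where the paper leaves it to the reader.
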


\begin{proof} If we put $\varphi^{\star}=\frac{1-\sqrt{5}}{2}=-\varphi^{-1}$ then, by Binet's formula, $$F_k=\frac{\varphi^k-(\varphi^{\star})^k}{\sqrt{5}}\ \forall k\in\mathbb N.$$ Now, $$F_{k+1}-\varphi F_k=\frac{\varphi (\varphi^{\star})^k-(\varphi^{\star})^{k+1}}{\sqrt{5}}=\frac{(\varphi^{\star})^k(\varphi-\varphi^{\star})}{\sqrt{5}}.$$ Since $\varphi-\varphi^{\star}=\sqrt{5}$, this is $(\varphi^{\star})^k$. Therefore, \begin{equation}\label{pfk}\varphi F_k=F_{k+1}+(-1)^{k+1}\varphi^{-k}.\end{equation} Taking floor on both sides of (\ref{pfk}), we get \begin{equation}\label{afk} a(F_k)=F_{k+1}+\lfloor (-1)^{k+1}\varphi^{-k}\rfloor.\end{equation} Since $0<\varphi^{-k}<1$, we conclude that \begin{equation}\label{fifk} a(F_k)=\begin{cases} F_{k+1}-1, & \text{ if }k\text{ is even,} \\ F_{k+1}, & \text{if }k\text{ is odd.}\end{cases}\end{equation} Now, since $\varphi^2=\varphi+1$, $\varphi^2 F_k=\varphi F_k+F_k$. Taking floor on both sides, we obtain $b(F_k)=a(F_k)+F_k$ and, using (\ref{fifk}), \begin{equation}\label{sfifk}b(F_k)=\begin{cases} F_{k+1}+F_k-1=F_{k+2}-1, & \text{ if }k\text{ is even,} \\ F_{k+1}+F_k=F_{k+2}, & \text{if }k\text{ is odd.}\end{cases}\end{equation} The result follows now from (\ref{fifk}) and (\ref{sfifk}).
\end{proof}

Now we will prove the theorem:

\begin{proof} Since $LWF(1)=1, LWF(2)=3, UWF(1)=2, UWF(2)=4$, we have to prove that $\{ LWF(n)\mid n\geq 3\}$ and $\{ UWF(n)\mid n\geq 3\}$ partition the set $\mathbb Z^{\geq 5}=\{5,6,\dots\}$.

By the lemma, we have that $\{LWF(n)\mid n\geq 3\}$ is the union of the sets $$S_1=\{b(m)\mid m\in\mathfrak F_3\}=\{b(2), b(3), b(5),...\}$$ and $$S_2=\{a(m)\mid m\in\mathbb Z^{\geq 4}-\mathfrak F_5\}=\{a(4), a(6), a(7), a(9),...\}.$$ Similarly, $\{UWF(n)\mid n\geq 3\}$ is the union of the sets $$S_3=\{a(m)\mid m\in\mathfrak F_5\}=\{a(5), a(8), a(13),...\}$$ and $$S_4=\{b(m)\mid m\in\mathbb Z^{\geq 4}-\mathfrak F_5\}=\{b(4), b(6), b(7), b(9),...\}.$$ Clearly, the sets $S_1, S_2, S_3, S_4$ are mutually disjoint, and their union is $$\mathbb N-\{a(1), a(2), a(3), b(1)\}=\mathbb Z^{\geq 5}.$$
\end{proof}

\section{Greedy almost-involution}

The sequence $q_n$ mentioned in Section \ref{intr} obtained with the greedy Algorithm \ref{alg} takes in the $j$-th position the first non-negative number $q_j$ which is not already in the previous values of the sequence and that the difference $q_j-j$ is distinct from the previous differences $q_k-k$ with $k<j$. This forces that $q_1=2$.

In this section we will use the same greedy algorithm to produce a sequence $q^{\star}_j$ in which we disturb the value of $q^{\star}_1$ (the only non-greedy modification), that we take to be $3$, and taking again, for $j\geq 2$, the first non-negative number $q^{\star}_j$ which is not in the previous values of the sequence and that $q^{\star}_j-j$ is distinct from the previous differences $q^{\star}_k-k$ with $k<j$.

Specifically, $q^{\star}_0=0, q^{\star}_1=3$ and, if we define for $j\geq 2$ $$U^{\star}_j=\{q^{\star}_k\mid 0\leq k<j\},$$ $$W^{\star}_j=\{q^{\star}_k-k+j\mid0\leq k<j\}$$ and $$S^{\star}_j=U^{\star}_j\cup W^{\star}_j,$$ then $$q^{\star}_j=\text{mex}(S^{\star}_j\cap\mathbb Z^{\geq 0}).$$

The first $100$ values of the sequence are\newline

0, 3, 1, 4, 2, 8, 10, 12, 5, 15, 6, 18, 7, 21, 23, 9, 26, 28, 11, 31, 
33, 13, 36, 14, 39, 41, 16, 44, 17, 47, 49, 19, 52, 20, 55, 57, 22, 
60, 62, 24, 65, 25, 68, 70, 27, 73, 75, 29, 78, 30, 81, 83, 32, 86, 
88, 34, 91, 35, 94, 96, 37, 99, 38, 102, 104, 40, 107, 109, 42, 112, 
43, 115, 117, 45, 120, 46, 123, 125, 48, 128, 130, 50, 133, 51, 136, 
138, 53, 141, 54, 144, 146, 56, 149, 151, 58, 154, 59, 157, 159, 61

In the next theorem we will determine the values of $q^{\star}_j$. In the statement and proof of the theorem we will use these notations: For $j\geq 3$, $$M^{\star}(j)=LWF(j),$$ $$m^{\star}(j)=M^{\star}(j)-j+1$$ and $$n^{\star}(j)=\text{max}\{k\mid\{0,1,\dots, k\}\subseteq U^{\star}_j\}.$$

Also, $$L(0)=0, L(1)=3, L(2)=1, L(3)=4, L(4)=2, L(5)=8.$$

In a similar way as we did in \cite{bmmmv}, we will consider also the sets $T^{\star}_1, T^{\star}_2, T^{\star}_3$ to  be the ranges of the sequences $$f^{\star}(n)=UWF(n), g^{\star}(n)=UWF(n)-1\text{ and }h^{\star}(n)=\begin{cases} 5, & \text{if }n=1, \\ 6, & \text{if }n=2, \\ 2LWF(n)+n, & \text{if }n\geq 3,\end{cases}$$respectively. The fisrst values of these sets are:

$$T^{\star}_1=\{2, 4, 8, 10, 12, 15, 18, 21, 23, 26,\dots\}$$

$$T^{\star}_2=\{1, 3, 7, 9, 11, 14, 17, 20, 22, 25,\dots\}$$

$$T^{\star}_3=\{5, 6, 13, 16, 19, 24, 29, 34, 37, 42,\dots\}$$

\begin{theo}\label{propqsj} $q^{\star}_j=L(j)$ for $0\leq j\leq 5$ and, if $j\geq 6$, the following identities hold:
\begin{enumerate}
\item $$W^{\star}_j=\{m^{\star}(j),m^{\star}(j)+1,\dots,M^{\star}(j)\}.$$
\item\label{mjm2} If $j\in T^{\star}_1$, then $n^{\star}(j)=m^{\star}(j)-2$.
\item\label{mjm1} If $j\in T^{\star}_2$, then $n^{\star}(j)=m^{\star}(j)-1$.
\item\label{mj} If $j\in T^{\star}_3$, then $n^{\star}(j)=m^{\star}(j)$.
\item\label{sj} $$S^{\star}_j\cap\mathbb Z^{\geq 0}=\begin{cases} \{0,1,\dots,M^{\star}(j)\}, & \text{if }j\in A^{\star},\\
\{0,1,\dots,M^{\star}(j)\}-\{m^{\star}(j)-1\}, & \text{if }j\in B^{\star}.\end{cases}$$
\item\label{qj} $$q^{\star}_j=\begin{cases} M^{\star}(j)+1, & \text{if }j\in A^{\star},\\
m^{\star}(j)-1, & \text{if }j\in B^{\star}.\end{cases}$$
\end{enumerate}
\end{theo}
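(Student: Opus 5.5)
We argue by strong induction on $j$, checking $j\le 5$ (and the base case $j=6$) directly from the definition of the greedy rule. The initial values $0,3,1,4,2,8$ are computed by hand. For $j=6$ we list $U^{\star}_6=\{0,1,2,3,4,8\}$ and $W^{\star}_6=\{6,8,5,7,4,9\}=\{4,\dots,9\}$. Since $M^{\star}(6)=LWF(6)=a(6)=9$ and $m^{\star}(6)=4$, this gives item 1. Because $6\in T^{\star}_3$, the claim $n^{\star}(6)=4$ holds, and since $6\in A^{\star}$ we get $q^{\star}_6=10$.

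For the inductive step we assume all six items for every $6\le i<j$ and derive them for $j$, in the following order.

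\emph{Item 1.} Passing from $j$ to $j+1$ shifts every old difference by one, so $W^{\star}_{j+1}=(W^{\star}_j+1)\cup\{q^{\star}_j+1\}$. If $j\in A^{\star}$ then $q^{\star}_j=M^{\star}(j)+1$, so the new element is $M^{\star}(j)+2$ and the interval extends by one at the top. If $j\in B^{\star}$ then $q^{\star}_j+1=m^{\star}(j)$, which lies inside $W^{\star}_j+1$, so the interval stays the same. Thus $W^{\star}_{j+1}$ is an interval with lower end $m^{\star}(j)+\delta$, where $\delta=1$ if $j\in A^{\star}$ and $\delta=0$ if $j\in B^{\star}$, and with upper end $M^{\star}(j)+1+\delta$. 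We therefore need $LWF(j+1)-LWF(j)=1+\delta$, i.e. consecutive differences of $LWF$ equal $2$ exactly when $j\in A^{\star}$ and $1$ when $j\in B^{\star}$. This is the Fibonacci-perturbed analogue of the classical fact that $a(j+1)-a(j)=2$ iff $j\in A$. We prove it using the Lemma: the perturbations $\epsilon$ at $F_k$ swap exactly the boundary values $a(F_k)\leftrightarrow b(F_{k-1})$ appearing in the proof of Theorem \ref{asbsp}. In practice we check the difference pattern of $LWF$ near $j=F_k$, $F_k\pm1$ against the membership of those $j$ in $A^{\star}$ and $B^{\star}$, and fall back on the classical fact elsewhere. (For $j\ge 6$ the relevant Fibonacci indices are away from the exceptional values $LWF(1),LWF(2)$.)

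\emph{Items 2–4.} We track $n^{\star}$. The set $U^{\star}_{j+1}$ is $U^{\star}_j$ plus $q^{\star}_j$, and only values $q^{\star}_j=m^{\star}(j)-1$ with $j\in B^{\star}$ can raise $n^{\star}$; values $M^{\star}(j)+1$ lie far above. So $n^{\star}(j)$ equals $m^{\star}(i)-1$ for the last $i<j$ in $B^{\star}$, possibly extended by later small values. Comparing with $m^{\star}(j)$, we show that $m^{\star}$ increases by one at each $A^{\star}$-step and stays constant at each $B^{\star}$-step. The gap $m^{\star}(j)-2-n^{\star}(j)\in\{0,1,2\}$ is then governed by the pattern of the last one or two indices, and we identify these patterns with the three sets $T^{\star}_1,T^{\star}_2,T^{\star}_3$. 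This requires the identities $T^{\star}_1=B^{\star}$ and $T^{\star}_2=B^{\star}-1$, together with the fact that $T^{\star}_3$ is the complement of $T^{\star}_1\cup T^{\star}_2$ in $\mathbb Z^{\ge 5}$, via $2LWF(n)+n$, i.e. $LWF(n)+UWF(n)$. This last identity, $UWF(n)=LWF(n)+n$ for $n\ge3$, we verify from $b=a+n$ and the fact that both sequences use the same $\epsilon(n)$.

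\emph{Items 5 and 6.} By item 1, $W^{\star}_j=[m^{\star},M^{\star}]$, and by items 2–4, $U^{\star}_j\supseteq[0,n^{\star}]$. We must also show that $U^{\star}_j$ contains no element in $(n^{\star},m^{\star}-1)$ missing, and contains none of $m^{\star}-1$ (when $j\in B^{\star}$) or $M^{\star}+1$. For $j\in A^{\star}$ we need $m^{\star}(j)-1\in U^{\star}_j$. This follows because in the $T^{\star}_1$ and $T^{\star}_2$ cases $j$ is in $B^{\star}$ or $B^{\star}-1$ appropriately, and the $B^{\star}$ case leaves exactly the hole $m^{\star}-1$. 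Nothing in $U^{\star}_j$ exceeds $M^{\star}(j)$, since the large values $M^{\star}(i)+1$ for $i<j$ are at most $M^{\star}(j)$ by monotonicity. Then mex yields item 6.

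The main obstacle is the combinatorial bookkeeping in items 2–4: proving rigorously that the three residue classes $T^{\star}_i$ match the local $A^{\star}/B^{\star}$ patterns of $j-1,j-2$, particularly at Fibonacci indices where $\epsilon$ shifts values. We handle this first by establishing a clean characterization, $j\in B^{\star}\Rightarrow j+1\in A^{\star}$, with the difference pattern of $LWF$ as above, and then reducing everything to it.
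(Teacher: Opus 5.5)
Your handling of item 1, of items 5--6 once 2--4 are known, and of the case $j\in T^{\star}_1$ matches the paper. That means the recursion $W^{\star}_{j+1}=(W^{\star}_j+1)\cup\{q^{\star}_j+1\}$ combined with the difference pattern of $LWF$ (Lemma \ref{mjmmmo}). For $j\in B^{\star}$ it also means the observation that $j-1\in T^{\star}_2$ and $q^{\star}_{j-1}$ is large, so $n^{\star}(j)=n^{\star}(j-1)=m^{\star}(j)-2$. One slip: for $j\in B^{\star}$ the new element $q^{\star}_j+1=m^{\star}(j)$ is not inside $W^{\star}_j+1$. It extends that interval at the bottom, which is what your formula for the lower end says anyway.

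\textbf{The gap is in items 3--4 when $j-1\in B^{\star}$.} You dismiss the values $M^{\star}(i)+1$ as lying far above. You then say $n^{\star}(j)$ is $m^{\star}(i)-1$ for the last $B^{\star}$-index $i<j$, possibly extended by later small values. But no small values are added between that $i$ and $j$. The only thing that can push $n^{\star}(j)$ beyond $m^{\star}(j)-1$ is an \emph{old} large value that has meanwhile fallen into the low range. This is exactly the $T^{\star}_3$ case. For $j=16$, $q^{\star}_{15}=9=m^{\star}(16)-1$ fills the hole, and $n^{\star}(16)=10=m^{\star}(16)$ only because $10=q^{\star}_6=M^{\star}(6)+1$. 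Your bookkeeping would give $n^{\star}(j)=m^{\star}(j)-1$ for every $j$ with $j-1\in B^{\star}$, contradicting item 4.

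Nor is the outcome governed by the $A^{\star}/B^{\star}$ pattern of $j-1,j-2$. Both $9\in T^{\star}_2$ and $16\in T^{\star}_3$ have $j-2\in A^{\star}$ and $j-1\in B^{\star}$. Which class $j$ belongs to depends on whether $j+1\in B^{\star}$, and backward tracking of $n^{\star}$ cannot detect that.

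What is missing is an invariant locating the large values, together with an explicit computation of $m^{\star}(j)$. For $i\in A^{\star}$ with $i\geq 5$, the value $q^{\star}_i=LWF(i)+1$ lies in $B^{\star}$, because $LWF(i+1)=LWF(i)+2$. Now use the identities $LWF(LWF(n))=UWF(n)-1$ and $LWF(UWF(n))=LWF(n)+UWF(n)$ from Lemma \ref{ts1ts2ts3p}, together with $UWF(k)-LWF(k)=k$ for $k\geq 3$.

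For $j=g^{\star}(n)$ one finds $m^{\star}(j)=LWF(n)\in A^{\star}$. This is not a large value, and it exceeds every small value $m^{\star}(i)-1\leq m^{\star}(j)-1$. Hence $n^{\star}(j)=m^{\star}(j)-1$ in the subcase $j\in T^{\star}_2$, $j-1\in B^{\star}$.

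For $j=h^{\star}(n)$ one finds $m^{\star}(j)=UWF(n)=q^{\star}_{LWF(n)}$ with $LWF(n)<j$, so it is already present. Meanwhile $UWF(n)+1\in A^{\star}$ is absent. Hence $n^{\star}(j)=m^{\star}(j)$ on $T^{\star}_3$; for $j\geq 7$, $j-1\in B^{\star}$ holds automatically there.

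Without an argument of this kind, items 3--4 are unproved. Item 2 and items 5--6 at later indices are derived from them, so the induction does not close. The paper writes out only the $T^{\star}_1$ case and calls the others similar, but those cases genuinely need this extra input.
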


We will use the following two lemmas:

\begin{lem}\label{mjmmmo} If $j\geq 5$, then $$M^{\star}(j)-M^{\star}(j-1)=\begin{cases} 2, & \text{if }j-1\in A^{\star},\\
1, & \text{if }j-1\in B^{\star}\end{cases}$$ and $$m^{\star}(j)-m^{\star}(j-1)=\begin{cases} 1, & \text{if }j-1\in A^{\star},\\
0, & \text{if }j-1\in B^{\star}.\end{cases}$$
\end{lem}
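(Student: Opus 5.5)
The second identity follows from the first, because $m^{\star}(j)-m^{\star}(j-1)=(M^{\star}(j)-M^{\star}(j-1))-1$. It therefore suffices to show that, for $j\geq 5$, $LWF(j)-LWF(j-1)$ equals $2$ when $j-1\in A^{\star}$ and $1$ when $j-1\in B^{\star}$. I would first check the small cases $j=5,6,7,8$ directly from the listed values. These are $LWF(3)=5$, $LWF(4)=6$, $LWF(5)=7$, $LWF(6)=9$, $LWF(7)=11$, $LWF(8)=13$, together with $A^{\star}=\{1,3,5,6,7,9,\dots\}$ and $B^{\star}=\{2,4,8,\dots\}$. After that I may assume $j\geq 6$, so that both $j$ and $j-1$ lie in the range where $LWF(n)=a(n)+\epsilon(n)$.

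The key classical fact is that $a(j)-a(j-1)=2$ if $j-1\in A$ and $=1$ if $j-1\in B$. Here $A,B$ are the ranges of $a,b$. This holds because $a(j)-a(j-1)\in\{1,2\}$, and it equals $2$ exactly when $a(j-1)+1\in B$. By the well-known identity $b(n)=a(a(n))+1$, the condition $a(j-1)+1\in B$ is equivalent to $j-1\in A$. Writing $M^{\star}(j)-M^{\star}(j-1)=a(j)-a(j-1)+\epsilon(j)-\epsilon(j-1)$, I then reduce the lemma to comparing membership of $j-1$ in $A$ versus $A^{\star}$, with the Fibonacci correction accounting for any discrepancy.

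The proof of Theorem \ref{asbsp} shows how $A^{\star}$ and $B^{\star}$ differ from $A$ and $B$ for arguments $\geq 5$: they differ exactly by swapping $a(F_k)$ and $b(F_{k-1})$ as described in the lemma preceding that proof. Concretely, $a(F_k)=F_{k+1}-1$ or $F_{k+1}$, and $b(F_k)=F_{k+2}-1$ or $F_{k+2}$, with the choice depending on parity. Hence the elements of $\mathbb N$ that change class are precisely $F_m$ and $F_m-1$ for suitable parities of $m$. More precisely, the number $F_{m}-[m \text{ odd}]$ moves between $A$ and $B$. I then split into cases.

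If neither $j$ nor $j-1$ is Fibonacci and $j-1$ is not a swapped number, both the $\epsilon$ terms and the class membership agree, and the claim is the classical one. In the remaining cases $j$ or $j-1$ is in $\mathfrak F$, or $j-1$ is a swapped value $F_m-1$, that is, $j=F_m$. In each of these I compute both sides explicitly with Binet-type formulas: (\ref{fifk}) gives $a(F_m)$, and $a(F_m\pm1)=a(F_m)+a(\pm 1)$-style estimates come from $\varphi F_m=F_{m+1}+(-1)^{m+1}\varphi^{-m}$. I then check that $\epsilon$ exactly compensates for the class swap.

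The main obstacle is this bookkeeping near Fibonacci numbers. When $j=F_m$ (so $\epsilon(j)=(-1)^m$) or $j-1=F_m$, I need $a(F_m+1)-a(F_m)$ and the membership of $F_m$ and $F_m-1$ in $A$ versus $A^{\star}$, with care about parity. I would handle it by tabulating the four subcases ($j=F_m$ or $j-1=F_m$, and $m$ even or odd), using $a(F_m+1)=F_{m+1}+1$ for $m$ even and $F_{m+1}+2$ for $m$ odd, which follows from the fractional-part formula. Two coincidences need separate treatment. One is when $j$ and $j-1$ are both Fibonacci, which happens only for small $j$ and is covered by the base cases. The other is $j-1=F_m-1$ with $F_m-1$ Fibonacci, which again occurs only for small values.
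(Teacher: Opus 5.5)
\textbf{This is a genuine gap in the Fibonacci bookkeeping, not in the approach.} Your strategy is the paper's. Both arguments use the classical fact that $a(j)-a(j-1)$ equals $2$ or $1$ according as $j-1\in A$ or $j-1\in B$; your derivation from $b(n)=a(a(n))+1$ is fine, while the paper quotes it from earlier work. Both then correct by $\epsilon(j)-\epsilon(j-1)$ and by the class swap from the proof of Theorem \ref{asbsp}. The paper only differs in organizing cases by which of $S_1,\dots,S_4$ contains $j-1$, rather than by whether $j$ or $j-1$ is a Fibonacci number.

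However, the two concrete facts you plan to use in the Fibonacci subcases are false, and with them the verification breaks. This is exactly the bookkeeping you flag as the main obstacle.

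\emph{First error: the swapped set.} The numbers that change class are not only $F_m-[m\text{ odd}]$. Since $\{a(F_{m-1}),b(F_{m-2})\}=\{F_m-1,F_m\}$, both $F_m$ and $F_m-1$ change class for every $m\geq 5$. You have dropped the family $b(F_k)=5,7,13,20,34,\dots$, which moves from $B$ to $A^{\star}$.

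\emph{Second error: the value of $a(F_m+1)$.} In fact $a(F_m+1)=F_{m+1}+1$ for all $m\geq 3$, not $F_{m+1}+2$ for odd $m$. By (\ref{pfk}), for odd $m$ one has $\varphi(F_m+1)=F_{m+1}+\varphi+\varphi^{-m}$ with $\varphi+\varphi^{-m}<2$; for example $a(14)=22$, not $23$.

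\emph{Why the subcases fail.} With your data, the subcase $j-1=F_m$, $m$ odd (say $j=14$) yields $M^{\star}(j)-M^{\star}(j-1)=2+0-(-1)=3$, which is impossible. The subcase $j=F_m$, $m$ even (say $j=21$) puts $j-1=20$ in $B^{\star}$ and demands a difference of $1$. In fact $LWF(21)-LWF(20)=34-32=2$, and $20=LWF(13)\in A^{\star}$.

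\textbf{The repair is short.} Use $a(F_m-1)=F_{m+1}-2$ and $a(F_m+1)=F_{m+1}+1$ (both parities, $m\geq 3$) together with (\ref{fifk}). For $j\geq 9$ the four subcases are:

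\begin{itemize}
\item If $j=F_m$ with $m$ even, the $a$-difference is $1$ and the $\epsilon$-difference is $+1$, total $2$, and indeed $j-1=b(F_{m-2})\in A^{\star}$.
\item If $j=F_m$ with $m$ odd, these are $2$ and $-1$, total $1$, and $j-1=a(F_{m-1})\in B^{\star}$.
\item If $j-1=F_m$ with $m$ even, they are $2$ and $-1$, total $1$, and $j-1=a(F_{m-1})\in B^{\star}$.
\item If $j-1=F_m$ with $m$ odd, they are $1$ and $+1$, total $2$, and $j-1=b(F_{m-2})\in A^{\star}$.
\end{itemize}

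Combine these with your generic case and your base cases $5\leq j\leq 8$. In the generic case neither $j$ nor $j-1$ is in $\mathfrak F$, so the $\epsilon$ terms cancel and $j-1$ keeps its class. This completes the proof.
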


\begin{proof} If $j=5$, then $j-1\in B^{\star}, M^{\star}(5)=7, M^{\star}(4)=6, m^{\star}(5)=3, m^{\star}(4)=3$, and the searched equalities hold.

 Let us suppose that $j\geq 6$. We will distinguish four cases:

Case $1$: If $j-1=a(n)$ with $n\not\in F$ then, by part (i) of Lemma 5.3 in \cite{bmmmv}, \begin{equation}\label{ajmajmo} a(j)-a(j-1)=2.\end{equation}
In this case, if $j-1\in F$ then, from (\ref{fifk}) and (\ref{sfifk}), $n\in F$, which contradicts our hypothesis.

Supposse, for contradiction, that $j\in F$, say $j=F_k$. If $k$ is even, then $k-2$ is also even and by (\ref{sfifk}), $b(F_{k-2})=F_k-1$ and $a(n)=F_k-1=b(F_{k-2})$, but this is not possible, because the lower and upper Wythoff sequences partition the set of natural numbers. Thus, $k$ is odd and by (\ref{fifk}), $a(F_{k-1})=F_k-1$. Therefore, $a(n)=F_k-1=a(F_{k-1})$, which contradicts our hypothesis.

Since $j\not\in F$ and $j-1\not\in F$, $\epsilon (j)-\epsilon(j-1)=0$ and, from (\ref{ajmajmo}),  $$M^{\star}(j)-M^{\star}(j-1)=2.$$ Now, the assertion about $m^{\star}(j)-m^{\star}(j-1)$ follows immediately.

Case $2$: If $j-1=a(F_k)$ for some $k$ then, by part (i) of Lemma 5.3 in \cite{bmmmv}, $$a(j)-a(j-1)=2.$$

If $k$ is odd then, by (\ref{fifk}), $a(F_k)=F_{k+1}$ and $\epsilon (j-1)=1$. Since $j\geq 6$, $j\not\in F$, and $\epsilon (j)=0$. Therefore, $\epsilon (j)-\epsilon (j-1)=-1$, and $$M^{\star}(j)-M^{\star}(j-1)=1.$$ If $k$ is even then, by (\ref{fifk}), $a(F_k)=F_{k+1}-1$, is not in $F$, and $\epsilon (j-1)=0$. Also, $j=F_{k+1}$ and $\epsilon (j)=-1$. Therefore, $\epsilon (j)-\epsilon (j-1)=-1$, and $$M^{\star}(j)-M^{\star}(j-1)=1.$$ Now, the assertion for $m^{\star}(j)-m^{\star}(j-1)$ is obvious.

The last two remaining cases, case $3$: $j-1=b(n)$ with $n\not\in F$ and case $4$:  $j-1=b(F_k)$ for some $k$, can be studied in a similar way to the two first ones, and we omit the details.
\end{proof}

\begin{lem}\label{ts1ts2ts3p} The sets $T^{\star}_1, T^{\star}_2, T^{\star}_3$ form a partition of $\mathbb N$.
\end{lem}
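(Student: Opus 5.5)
The plan is to reduce Lemma~\ref{ts1ts2ts3p} to a statement about consecutive elements of $A^{\star}$, and then use Theorem~\ref{asbsp} and Lemma~\ref{mjmmmo}. Since $T^{\star}_1=B^{\star}$ and $T^{\star}_2=B^{\star}-1$, the lemma is equivalent to two claims.

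\emph{Claim (a).} $B^{\star}$ contains no two consecutive integers. This makes $T^{\star}_1$ and $T^{\star}_2$ disjoint.

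\emph{Claim (b).} $T^{\star}_3$ is exactly the set of $j\in\mathbb N$ with $j\notin B^{\star}$ and $j+1\notin B^{\star}$. Equivalently, using Theorem~\ref{asbsp}, it is the set of $j$ with $j-1,j\in A^{\star}\cup\{0\}$, where the convention $0$ covers $j=1$.

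Together these give the partition. The cases $j\le 7$ or so are checked directly from the listed values ($T^{\star}_3$ starts $5,6,13,\dots$, and $A^{\star}$ starts $1,3,5,6,7,9,\dots$).

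For (a), I would write $UWF(n+1)-UWF(n)=b(n+1)-b(n)+\epsilon(n+1)-\epsilon(n)$. Classically $b(n+1)-b(n)\in\{2,3\}$. For $n\ge 4$ at most one of $n,n+1$ is a Fibonacci number, so the correction lies in $\{-1,0,1\}$. The difference can therefore drop to $1$ only if $b(n+1)-b(n)=2$ and the correction equals $-1$. That happens only when $n+1=F_k$ with $k$ odd, or $n=F_k$ with $k$ even. In these cases I would use (\ref{sfifk}) and $b(F_k\pm1)$ computed from (\ref{pfk}) to show that $b(n+1)-b(n)=3$ there, ruling the situation out.

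For (b), the first part of Lemma~\ref{mjmmmo} says that for $n\ge 5$, $LWF(n)-LWF(n-1)=1$ exactly when $n-1\in B^{\star}$. So the upper elements of consecutive pairs in $A^{\star}$ are precisely the numbers $LWF(UWF(m)+1)$, with $m\ge 1$ and $UWF(m)\ge 4$, plus the small pairs $(5,6)$ and $(6,7)$, which are handled by hand. It then remains to prove the identity
$$LWF(UWF(m)+1)=2LWF(m)+m\qquad(m\ge 3).$$
Classically $a(b(m))=a(m)+b(m)-1$ and $a(b(m)+1)=a(b(m))+1$, so $a(b(m)+1)=a(m)+b(m)=2a(m)+m$. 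The identity therefore reduces to comparing the corrections. The left side carries $\epsilon(UWF(m)+1)$, where $UWF(m)=b(m)+\epsilon(m)$, and the right side carries $2\epsilon(m)$.

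I expect this bookkeeping to be the main obstacle. It splits into cases.
\begin{enumerate}
\item If $m\notin\mathfrak F$, I must show $b(m)+1\notin\mathfrak F$. The lemma $b(F_k)+(-1)^k=a(F_{k+1})$ and (\ref{sfifk}) show that Fibonacci numbers adjacent to upper Wythoff values come only from Fibonacci arguments.
\item If $m=F_k$ with $k$ even, then $b(m)=F_{k+2}-1$, $UWF(m)=F_{k+2}$, and $UWF(m)+1=F_{k+2}+1$. In this case I need $a(F_{k+2}+1)=a(F_{k+2})+2=F_{k+3}+1$, obtained via (\ref{fifk}).
\item If $m=F_k$ with $k$ odd, the analogous computation applies, with the signs $\pm2$ matching $2\epsilon(m)$.
\end{enumerate}
I would carry these out using (\ref{pfk}) to evaluate $a(F_r\pm1)$ exactly. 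Finally, surjectivity onto both sides is automatic, because both descriptions enumerate the same increasing list.
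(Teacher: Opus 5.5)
Your reduction to Claims (a) and (b), in the form you first state them, is correct, and your argument for (a) goes through. In both bad cases $n\in A$, because $F_k=a(F_{k-1})$ for $k$ even and $F_k-1=a(F_{k-1})$ for $k$ odd, so $b(n+1)-b(n)=3$.

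\textbf{The gap is an off-by-one error in (b).} The condition $j\notin B^{\star}$, $j+1\notin B^{\star}$ means $j,j+1\in A^{\star}$. So $T^{\star}_3=\{5,6,13,16,19,\dots\}$ is the set of \emph{lower} elements of consecutive pairs in $A^{\star}$. It is not the set of $j$ with $j-1,j\in A^{\star}\cup\{0\}$: your version would exclude $5=h^{\star}(1)$, since $4\in B^{\star}$, and would include $1$, which lies in $T^{\star}_2$. As a result, the identity you set out to prove, $LWF(UWF(m)+1)=2LWF(m)+m$, is false. For $m=3$ the left side is $LWF(9)=14$ and the right side is $13$; for $m=4$ you get $LWF(11)=17\neq 16$. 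It only looked consistent because the classical identity you quote is also off by one. In fact $a(b(m))=a(m)+b(m)$ (e.g.\ $a(b(1))=a(2)=3$), and it is $b(a(m))$ that equals $a(m)+b(m)-1$. Your own case 2 would reveal the mismatch: you correctly get $LWF(UWF(F_k)+1)=F_{k+3}+1$, while $2LWF(F_k)+F_k=2F_{k+1}+F_k=F_{k+3}$.

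\textbf{The repair is to track lower elements.} By Lemma~\ref{mjmmmo}, for $j\ge 5$ one has $LWF(j)=LWF(j-1)+1$ exactly when $j-1=UWF(m)$ with $m\ge 2$. Hence the lower elements of consecutive pairs in $A^{\star}$ are $5$ together with $LWF(UWF(m))$ for $m\ge 2$ (this gives $6$ for $m=2$). What must be proved is then $LWF(UWF(m))=2LWF(m)+m$ for $m\ge 3$, which is the paper's identity (\ref{lwfuwf}). It follows from $a(b(m))=2a(m)+m$ together with the corrections:
\begin{itemize}
\item if $m\notin\mathfrak F$, then $b(m)\notin\mathfrak F$;
\item if $m=F_k$ with $k$ even, then $UWF(m)=F_{k+2}$ and $LWF(F_{k+2})=a(F_{k+2})+1=F_{k+3}=2F_{k+1}+F_k$;
\item if $k$ is odd, then $UWF(m)=F_{k+2}-1$ and $LWF(F_{k+2}-1)=a(F_{k+2})-2=F_{k+3}-2=2(F_{k+1}-1)+F_k$.
\end{itemize}

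With this correction, your route differs from the paper's only in how it handles $T^{\star}_2$. The paper also proves $LWF(LWF(n))=UWF(n)-1$ for $n\ge 3$ and reads the partition off $A^{\star}=LWF(A^{\star})\sqcup LWF(B^{\star})$. Your Claim (a) replaces that second identity with the fact that $B^{\star}$ has no two consecutive elements, which is slightly more elementary.
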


\begin{proof} Since, by Theorem \ref{asbsp}, $A^{\star}$ and  $B^{\star}$ is a partition of $\mathbb N$, it is sufficient to prove that $A^{\star}$ is the disjoint union of $T^{\star}_2$ and $T^{\star}_3$. We will prove that, for $n\geq 3$, \begin{equation}\label{lwflwf} LWF(LWF(n))=g^{\star}(n)\end{equation} and, for $n\geq 2$, \begin{equation}\label{lwfuwf} LWF(UWF(n))=h^{\star}(n).\end{equation} We will distinguish three cases to prove (\ref{lwflwf}).

Case $1$: $n=F_k$ with $k$ odd. By using (\ref{fifk}), $$LWF(n)=a(F_k)-1=F_{k+1}-1$$ and, since $F_{k+1}-1\not\in\mathfrak F$, then $$LWF(LWF(n))=a(F_{k+1}-1).$$ On the other side, $$g^{\star}(n)=g^{\star}(F_k)=UWF(F_k)-1=b(F_k)-2.$$ By (\ref{sfifk}), this is $F_{k+2}-2$.

It is well known that lower Wythoff sequence increases in $1$ for positions in $B$, and that $F_{k+1}-1\in B$ when $k$ is odd, and hence $$a(F_{k+1}-1)=a(F_{k+1})-1$$ and, by (\ref{fifk}), this is $F_{k+2}-2$, and we are done by (\ref{sfifk}).

A similar argument, which will be omitted, can be used for the next two cases: Case $2$: $n=F_k$ with $k$ even and Case $3$: $n\not\in\mathfrak F$.

Also, a similar argument, which will be omitted again, can be used for (\ref{lwfuwf}).

Now the assertion that $T^{\star}_2$ and $T^{\star}_3$ partition the set $A^{\star}$ is obvious from Theorem \ref{asbsp}.
\end{proof}

Now we will prove the theorem:

\begin{proof} The first part of the theorem about $q^{\star}_j$ for $0\leq j\leq 5$ can be easily checked from the definition of $q^{\star}_j$.

We will prove the second part by induction on $j$. For $j=6$ the result holds immediately. Let us assume that $j\geq 7$ and that it is true for $j-1$. We have that $$W^{\star}_j=(W^{\star}_{j-1}+1)\cup\{q^{\star}_{j-1}+1\}.$$ By the inductive hypothesis, $$W^{\star}_{j-1}+1=\{m^{\star}(j-1)+1,\dots,M^{\star}(j-1)+1\}.$$ Now we will distinguish two cases: If $j-1\in A^{\star}$ then, by the inductive hypothesis, $$q^{\star}_{j-1}+1=M^{\star}(j-1)+2.$$ By Lemma \ref{mjmmmo}, in this case $m^{\star}(j-1)+1=m^{\star}(j)$ and $M^{\star}(j-1)+2=M^{\star}(j)$, and the assertion about $W^{\star}_j$ is proved. If $j-1\in B^{\star}$ a similar argument is made, using again Lemma \ref{mjmmmo}.

Now we will prove parts (\ref{mjm2}),(\ref{mjm1}),(\ref{mj}). We will distinguish three cases. Lemma \ref{ts1ts2ts3p} guarantees that exactly one of them holds for each $j$.

Case 1: If $j\in T^{\star}_1=B^{\star}$, then $j-1\not\in B^{\star}$ because, for $n\geq 4$, $$UWF(n)-UWF(n-1)=LWF(n)-LWF(n-1)+1$$ and, by Lemma \ref{mjmmmo}, $LWF(n)-LWF(n-1)\geq 1$. Therefore, $j-1\in A^{\star}$. Now, by Lemma \ref{mjmmmo}, $$m^{\star}(j)-m^{\star}(j-1)=1.$$ it holds that $$U^{\star}_j=U^{\star}_{j-1}\cup\{q^{\star}_{j-1}\}$$ and, since $j-1\in A^{\star}$, by the inductive hypothesis, $$q^{\star}_{j-1}=M^{\star}(j-1)+1=m^{\star}(j-1)+j-2.$$ Also, since $j-1\in T^{\star}_2$, by the inductive hypothesis, $n^{\star}(j-1)=m^{\star}(j-1)-1$ and therefore $$q^{\star}_{j-1}\geq n^{\star}(j-1)+2,$$ and $$n^{\star}(j)=n^{\star}(j-1)=m^{\star}(j-1)-1$$ and, by Lemma \ref{mjmmmo}, this is $m^{\star}(j)-2$.

In the other two cases the proof is similar and we will omit it. They are:

Case 2: $j\in T^{\star}_2$, that is divided in two subcases: Subcase 2a: $j-1\in LWF$ and Subcase 2b: $j-1\in UWF$.

Case 3: $j\in T^{\star}_3$. 

Finally, parts (\ref{sj}) and (\ref{qj}) follow directly from the previous ones.
\end{proof}

Although in the previous theorem, in order to match the cases in the partition $T^{\star}_1, T^{\star}_2, T^{\star}_3$, the casuistic $j\leq 5$ and $j\geq 6$ are consider, when describing $q^{\star}_j$ only the cases $0\leq j\leq 2$ need to be considered separately, as we can see in the following obvious corollary of the theorem:

\begin{cor}\label{fqs}
$$q^{\star}_j=\begin{cases} L(j) & \text{if }0\leq j\leq 2,\\
\lfloor\varphi j\rfloor +1, & \text{if }j\in A^{\star}-F,\\
\lfloor\varphi j\rfloor, & \text{if }j\in A^{\star}\cap F,\\
\lfloor (\varphi-1) j\rfloor, & \text{if }j\in B^{\star}-F,\\
\lfloor (\varphi-1) j\rfloor+1, & \text{if }j\in B^{\star}\cap F.\end{cases}$$
\end{cor}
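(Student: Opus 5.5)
The plan is to read the corollary off part (\ref{qj}) of Theorem \ref{propqsj}, using the definitions of $M^{\star}$, $m^{\star}$ and $LWF$, and then to check the small indices by hand. For $j\geq 3$ we have $M^{\star}(j)=LWF(j)=\lfloor\varphi j\rfloor+\epsilon(j)$. If $j\in A^{\star}$, Theorem \ref{propqsj} gives $q^{\star}_j=M^{\star}(j)+1=\lfloor\varphi j\rfloor+\epsilon(j)+1$. If $j\in B^{\star}$, it gives $q^{\star}_j=m^{\star}(j)-1=M^{\star}(j)-j=\lfloor\varphi j\rfloor-j+\epsilon(j)=\lfloor(\varphi-1)j\rfloor+\epsilon(j)$. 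When $j\notin F$ we have $\epsilon(j)=0$, so both formulas reduce to the stated ones at once.

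The real content is the Fibonacci cases. The corollary asserts that $\epsilon(j)=-1$ whenever $j\in A^{\star}\cap F$, and that $\epsilon(j)=+1$ whenever $j\in B^{\star}\cap F$. I would prove this parity statement directly: for $j=F_k$ with $k$ large, $F_k\in A^{\star}$ exactly when $k$ is odd. To see this, use the Lemma (formulas (\ref{fifk}) and (\ref{sfifk})) together with the description of $A^{\star}$ and $B^{\star}$ from the proof of Theorem \ref{asbsp}.
\begin{itemize}
\item If $k$ is odd, then $F_{k+2}=b(F_k)$ with $F_k\in\mathfrak F_3$, so $F_{k+2}\in S_1\subseteq A^{\star}$.
\item If $k$ is even, then $F_{k+1}=a(F_k)+1=a(F_k)+(-1)^k\cdot(-1)$. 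It is cleaner to write $F_{k+1}=b(F_{k-1})+1$, or rather to note from (\ref{fifk}) that for odd $k-1$ we have $a(F_{k-1})=F_k$. So for even $k$, $F_k=a(F_{k-1})$ with $F_{k-1}\in\mathfrak F_5$ once $k\geq 6$, which puts $F_k\in S_3\subseteq B^{\star}$.
\item If $k$ is odd, the first bullet gives $F_k=b(F_{k-2})\in S_1\subseteq A^{\star}$.
\end{itemize}
Hence for Fibonacci $j$ above the small range, $j\in A^{\star}$ iff the index is odd iff $\epsilon(j)=-1$, and $j\in B^{\star}$ iff the index is even iff $\epsilon(j)=+1$.

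This is not quite what the corollary says: odd index gives $\epsilon=(-1)^k=-1$, so on $A^{\star}\cap F$ the formula becomes $\lfloor\varphi j\rfloor-1+1=\lfloor\varphi j\rfloor$, which matches. Even index gives $\epsilon=+1$, so on $B^{\star}\cap F$ the formula becomes $\lfloor(\varphi-1)j\rfloor+1$, which also matches. So the parity bookkeeping above is exactly the needed fact.

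Finally, Theorem \ref{propqsj} covers only $j\geq 6$, with $L$ given for $j\leq 5$. I would therefore verify $j=3,4,5$ directly against the listed values $q^{\star}_3=4$, $q^{\star}_4=2$, $q^{\star}_5=8$.
\begin{itemize}
\item $j=3=F_4\in A^{\star}$: $\lfloor 3\varphi\rfloor=4$. Here the index $4$ is even yet $3\in A^{\star}$, so the small Fibonacci numbers break the parity pattern; the direct check still gives the value $4$, as the corollary predicts.
\item $j=4\in B^{\star}$, not Fibonacci: $\lfloor 4(\varphi-1)\rfloor=2$.
\item $j=5=F_5\in A^{\star}$: $\lfloor 5\varphi\rfloor=8$.
\end{itemize}
The main obstacle is exactly this membership-versus-parity check for small Fibonacci numbers and the threshold where $\mathfrak F_3$ and $\mathfrak F_5$ begin. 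I would handle every $j\leq 13$ by explicit comparison with the listed first $100$ values, and use the general argument above only beyond that.
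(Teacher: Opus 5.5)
Your proposal is correct and takes the same approach the paper intends when it calls this an obvious corollary: you combine part (\ref{qj}) of Theorem \ref{propqsj} with $M^{\star}(j)=\lfloor\varphi j\rfloor+\epsilon(j)$, show via (\ref{fifk}), (\ref{sfifk}) and the sets $S_1,S_3$ that $F_k\in A^{\star}$ iff $k$ is odd for $k\geq 5$, and check $j=3,4,5$ directly. One cleanup: in your second bullet, delete the two false intermediate identities $F_{k+1}=a(F_k)+(-1)^k\cdot(-1)$ and $F_{k+1}=b(F_{k-1})+1$; the step you actually use, $F_k=a(F_{k-1})\in S_3$ for even $k\geq 6$, is right.
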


A direct consequence of this corollary is that the elements of the sequence $q^{\star}_j$ distribute asymptotically along two lines centered at the origin with slopes $\varphi$ and $\varphi-1$, depending on if the index is in LWF or UWF, respectively.
In the next figure the plot of the first $100$ terms of the sequence can be seen, where it is possible to note the two mentioned asymptotes:

\begin{figure}[H]
\includegraphics[width= 0.7\textwidth]{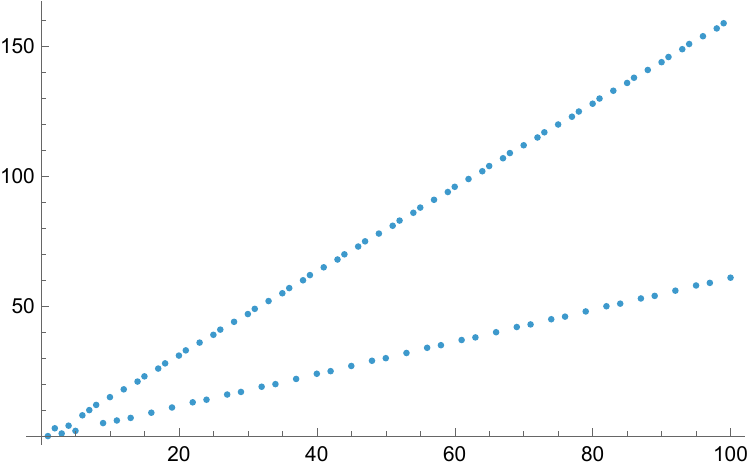}
\caption{Plot of the first $100$ values of the sequence $q^{\star}_j$.}
\end{figure}

\begin{theo} If $j\geq 5$, then $q^{\star}_{q^{\star}_j}=j$.
\end{theo}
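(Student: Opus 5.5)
The plan is to show that, from index $5$ on, $q^{\star}$ simply exchanges $LWF(n)$ and $UWF(n)$ for every $n\geq 3$. Since by Theorem \ref{asbsp} every $j\geq 5$ is of exactly one of these two forms with $n\geq 3$ (because $LWF(1)=1$, $LWF(2)=3$, $UWF(1)=2$, $UWF(2)=4$), the involution property then follows at once. The tools are part (\ref{qj}) of Theorem \ref{propqsj} together with the two identities (\ref{lwflwf}) and (\ref{lwfuwf}) established in the proof of Lemma \ref{ts1ts2ts3p}.

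\emph{Step 1: $q^{\star}_{LWF(n)}=UWF(n)$ for $n\geq 3$.} Put $j=LWF(n)\in A^{\star}$. If $j\geq 6$, which holds for $n\geq 4$, part (\ref{qj}) of Theorem \ref{propqsj} gives
$$q^{\star}_j=M^{\star}(j)+1=LWF(LWF(n))+1.$$
By (\ref{lwflwf}) this equals $g^{\star}(n)+1=UWF(n)$. The remaining case $n=3$ gives $j=5$. There we check directly that $q^{\star}_5=L(5)=8$ and that $UWF(3)=b(3)+\epsilon(3)=7+1=8$.

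\emph{Step 2: $q^{\star}_{UWF(n)}=LWF(n)$ for $n\geq 3$.} Put $j=UWF(n)\in B^{\star}$, so that $j\geq UWF(3)=8\geq 6$. Part (\ref{qj}) of Theorem \ref{propqsj} gives
$$q^{\star}_j=m^{\star}(j)-1=LWF(j)-j=LWF(UWF(n))-UWF(n).$$
By (\ref{lwfuwf}) this equals $2LWF(n)+n-UWF(n)$. It remains to show $UWF(n)=LWF(n)+n$ for $n\geq 3$. Since $\varphi^2 n=\varphi n+n$, we have $b(n)=a(n)+n$. Both $LWF(n)$ and $UWF(n)$ receive the same correction $\epsilon(n)$, so the identity follows and $q^{\star}_j=LWF(n)$.

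\emph{Conclusion.} Let $j\geq 5$. If $j=LWF(n)$, then $n\geq 3$, and Steps 1 and 2 give $q^{\star}_{q^{\star}_j}=q^{\star}_{UWF(n)}=LWF(n)=j$. If $j=UWF(n)$, then $n\geq 3$, and Steps 2 and 1 give $q^{\star}_{q^{\star}_j}=q^{\star}_{LWF(n)}=UWF(n)=j$.

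The main obstacle is that (\ref{lwflwf}) and (\ref{lwfuwf}) were only partly proved in the paper: several of their cases were omitted. If they need to be filled in, I would use the same case split as in Lemma \ref{ts1ts2ts3p}, namely $n=F_k$ with $k$ odd, $n=F_k$ with $k$ even, and $n\notin\mathfrak F$. In each case I would apply (\ref{fifk}), (\ref{sfifk}) and the classical identities $a(a(n))=b(n)-1$ and $a(b(n))=a(n)+b(n)$. The step needing most care is tracking the $\epsilon$-corrections, that is, deciding whether $LWF(n)$ or $UWF(n)$ is itself a Fibonacci number.
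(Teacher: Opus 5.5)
Your proof is correct, and it takes a different route from the paper's. Both arguments reduce the statement to showing that $q^{\star}$ exchanges $LWF(n)$ and $UWF(n)$ for $n\geq 3$. They differ in how they verify that exchange.

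The paper checks it through the closed floor formula of Corollary \ref{fqs}. It splits into $t\notin\mathfrak F$, handled by the swap property of A002251 (i.e.\ the classical $a(a(t))=b(t)-1$ and $a(b(t))=a(t)+b(t)$), and $t=F_k$ with $k$ odd or even, handled by explicit floor estimates from (\ref{pfk}).

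You instead read part (\ref{qj}) of Theorem \ref{propqsj} as $q^{\star}_j=LWF(j)+1$ on $A^{\star}$ and $q^{\star}_j=LWF(j)-j$ on $B^{\star}$. Then the composition identities (\ref{lwflwf}) and (\ref{lwfuwf}), together with $UWF(n)=LWF(n)+n$, give the exchange in two lines, with only $j=5$ checked by hand.

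Your route bypasses Corollary \ref{fqs} and needs no new floor arithmetic. It puts all the Fibonacci bookkeeping into identities the paper already needs for Lemma \ref{ts1ts2ts3p}. It also shows that the involution and the partition $T^{\star}_1,T^{\star}_2,T^{\star}_3$ come from the same two identities. The paper's route is more self-contained at this point, using only the closed formula and Binet-type estimates. However, it also leaves a case ($k$ even) to the reader, so the two are comparably complete as written.

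Your fallback for proving (\ref{lwflwf}) and (\ref{lwfuwf}) is sound:
\begin{itemize}
\item For $n\notin\mathfrak F$, neither $a(n)$ nor $b(n)$ is a Fibonacci number, by (\ref{fifk}), (\ref{sfifk}) and the Wythoff partition. So the classical identities apply verbatim.
\item For $n=F_k$ one computes directly. For example, for $k$ even, $LWF(F_k)=F_{k+1}$ with $\epsilon(F_{k+1})=-1$, whence $LWF(LWF(F_k))=F_{k+2}-1=UWF(F_k)-1$.
\end{itemize}
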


\begin{proof} Every $j\geq 5$ can be written in a unique way in exactly one of these two ways: $$j=a(t)+\epsilon (t)\text{ or }j=b(t)+\epsilon (t).$$ It is sufficient to prove that \begin{equation}\label{atpebtpe} q^{\star}_{a(t)+\epsilon (t)}=b(t)+\epsilon (t)\text{ and }q^{\star}_{b(t)+\epsilon (t)}=a(t)+\epsilon (t).\end{equation} We will distinguish three cases:

Case $1$: $t\not\in F$.

Then, $\epsilon (t)=0$, and we need to prove that $q^{\star}_{a(t)}=b(t)$ and $q^{\star}_{b(t)}=a(t)$. This is obvious, because, by (\ref{fifk}) and (\ref{sfifk}), none of $a(t)$ and $b(t)$ is in $F$, and by Corollary \ref{fqs}, the images by $q^{\star}$ of $a(n)$ and $b(n)$ are the same that the respectives images by the sequence $q_n$ with code A002251 in Sloane's database and, as we said before, that sequence swaps the values of the lower and upper Wythoff sequences.

Case 2: $t=F_k$ with $k$ odd.

It holds that $a(t)+\epsilon (t)=a(F_k)-1$ and, by (\ref{fifk}), this is $F_{k+1}-1$, which is in $A^{\star}-F$ and, by Corollary \ref{fqs}, $$q^{\star}_{a(t)+\epsilon (t)}=\lfloor\varphi (F_{k+1}-1)\rfloor+1.$$

On the other hand, $b(t)+\epsilon (t)=b(F_k)-1$ and, by (\ref{sfifk}), this is $F_{k+2}-1$, which is in $B^{\star}-F$ and, by Corollary \ref{fqs}, $$q^{\star}_{b(t)+\epsilon (t)}=\lfloor(\varphi -1) (F_{k+2}-1)\rfloor.$$

Therefore, we have to prove that \begin{equation}\label{A1}\lfloor\varphi (F_{k+1}-1)\rfloor+1=F_{k+2}-1\end{equation} and \begin{equation}\label{A2}\lfloor(\varphi -1) (F_{k+2}-1)\rfloor=F_{k+1}-1.\end{equation} To prove (\ref{A1}), $\varphi(F_{k+1}-1)=\varphi F_{k+1}-\varphi$ and this, by (\ref{pfk}), is $$F_{k+2}+(-1)^{k+2}\varphi^{-k-1}-\varphi=F_{k+2}-\varphi^{-k-1}-\varphi=F_{k+2}-1+(1-\varphi^{-k-1}-\varphi).$$ Since $-1<1-\varphi^{-k-1}-\varphi<0$ for $k\geq 2$, taking floor in $$\varphi(F_{k+1}-1)=F_{k+2}-1+(1-\varphi^{-k-1}-\varphi)$$ we get the desired identity.

Similarly, to prove (\ref{A2}), $(\varphi-1)(F_{k+2}-1)=\varphi F_{k+2}-\varphi-F_{k+2}+1$ and this, again by (\ref{pfk}), is $$F_{k+3}+(-1)^{k+3}\varphi^{-k-2}-\varphi-F_{k+2}+1=F_{k+1}+\varphi^{-k-2}-\varphi+1$$ and since, for $k\geq 1, -1<\varphi^{-k-2}-\varphi+1<0$, taking floor in $$(\varphi-1)(F_{k+2}-1)=F_{k+1}+\varphi^{-k-2}-\varphi+1$$ we obtain the desired identity.

Finally, Case 3: $t=F_k$ with $k$ even, can be treated in a similar way to Case $2$, and we will omit it.
\end{proof}

\begin{cor} $q^{\star}_j$ is a permutation of the set $\mathbb Z^{\geq 0}$ of non-negative numbers.
\end{cor}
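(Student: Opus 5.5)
To prove the corollary we have to show that the map $j\mapsto q^{\star}_j$ on $\mathbb Z^{\geq 0}$ is injective and surjective. The plan is to split $\mathbb Z^{\geq 0}$ into the finite initial block $\{0,1,2,3,4\}$ and the tail $\mathbb Z^{\geq 5}$, and to use the almost-involution property on the tail.

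\textbf{The tail.} By the last theorem, $q^{\star}_{q^{\star}_j}=j$ for every $j\geq 5$. We first show that $q^{\star}$ maps $\mathbb Z^{\geq 5}$ into $\mathbb Z^{\geq 5}$.
\begin{itemize}
\item For $j\in A^{\star}$ with $j\geq 6$, part (\ref{qj}) of Theorem \ref{propqsj} gives $q^{\star}_j=M^{\star}(j)+1\geq j+1>5$, since $LWF(j)\geq j$.
\item For $j\in B^{\star}$ with $j\geq 6$, we have $q^{\star}_j=m^{\star}(j)-1=LWF(j)-j$. By Corollary \ref{fqs} this is at least $\lfloor(\varphi-1)j\rfloor$. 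That bound gives $q^{\star}_j\geq 5$ once $j\geq 9$, and the remaining value $j=8$ (the only element of $B^{\star}$ in $\{6,7,8\}$) can be read off the table: $q^{\star}_8=5$.
\item For $j=5$ the table gives $q^{\star}_5=8$.
\end{itemize}
So $q^{\star}$ restricts to a map $\mathbb Z^{\geq 5}\to\mathbb Z^{\geq 5}$. Since $q^{\star}_{q^{\star}_j}=j$ for all $j\geq 5$, this restriction is its own inverse, hence a bijection of $\mathbb Z^{\geq 5}$.

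\textbf{The initial block.} The values $L(0),\dots,L(4)$ are $0,3,1,4,2$. This is a permutation of $\{0,1,2,3,4\}$, and $L(5)=8$ is consistent with the tail analysis above.

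\textbf{Conclusion.} Combining the two pieces, $q^{\star}$ permutes $\{0,\dots,4\}$ and separately permutes $\mathbb Z^{\geq 5}$, so it is a permutation of $\mathbb Z^{\geq 0}$.

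The only step needing care is that indices $j\geq5$ never map into $\{0,\dots,4\}$. For small $j$ we check this directly against the listed values, and for large $j$ it follows from the linear lower bounds of Corollary \ref{fqs}.
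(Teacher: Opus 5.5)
Your proof is correct and follows the same route as the paper: $q^{\star}$ permutes $\{0,1,2,3,4\}$ (the paper says it fixes $0$ and acts as the cycle $(1,3,4,2)$), and it is an involution on $\mathbb Z^{\geq 5}$. Your explicit check that $q^{\star}$ maps $\mathbb Z^{\geq 5}$ into itself is a step the paper leaves implicit; it also follows directly from (\ref{atpebtpe}), since $q^{\star}$ swaps $LWF(t)$ and $UWF(t)$ for $t\geq 3$, and both of these are at least $5$.
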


\begin{proof} It fixes $0$, is an involution on $\mathbb Z^{\geq 5}$ and, restricted to $\{1,2,3,4\}$ is the cycle $(1,3,4,2)$.
\end{proof}

Therefore, it is a permutation of order $4$ that is an involution when restricted to $\mathbb Z^{\geq 5}$. We will call to such a permutation an almost-involution.

In the next theorem and its corollary, we will prove that, as happened with the sequence $q_j$, also when we consider the sequence $q^{\star}_j-j$ every integer number appears exactly once:

\begin{theo} For every $i\geq 3$ $$q^{\star}_{LWF(i)}-LWF(i)=i$$ and $$q^{\star}_{UWF(i)}-UWF(i)=-i.$$
\end{theo}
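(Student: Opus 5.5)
The cleanest route avoids floor-function estimates altogether and uses the description of $q^{\star}_j$ from Theorem \ref{propqsj} together with the involution theorem. Fix $i\geq 3$ and put $j=LWF(i)$. Since $LWF$ is increasing with $LWF(3)=5$, we have $j\geq 5$; also $j\in A^{\star}$, and $j\geq 6$ once $i\geq 4$. By part (\ref{qj}) of Theorem \ref{propqsj}, $q^{\star}_j=M^{\star}(j)+1=LWF(j)+1$. So the first identity reduces to showing $LWF(LWF(i))=LWF(i)+i-1$. The case $j=5$ ($i=3$) is checked directly from the listed values: $q^{\star}_5=8=5+3$.

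To establish $LWF(LWF(i))=LWF(i)+i-1$, I would use the relation (\ref{lwflwf}) from the proof of Lemma \ref{ts1ts2ts3p}, namely $LWF(LWF(i))=g^{\star}(i)=UWF(i)-1$. It then remains to show $UWF(i)=LWF(i)+i$ for $i\geq 3$. This is immediate from the definitions:
\[
b(i)=\lfloor \varphi i+i\rfloor=a(i)+i,
\]
and both $LWF(i)$ and $UWF(i)$ carry the same correction $\epsilon(i)$. This yields the first identity.

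For the second identity, let $j=UWF(i)$ with $i\geq 3$. The involution theorem gives $q^{\star}_{q^{\star}_{\ell}}=\ell$ for $\ell\geq 5$. Take $\ell=LWF(i)\geq 5$. By the first part, $q^{\star}_{\ell}=\ell+i=LWF(i)+i=UWF(i)$. Applying the involution, $q^{\star}_{UWF(i)}=LWF(i)=UWF(i)-i$, which is the second identity. Alternatively, one can argue directly: $j\in B^{\star}$, so $q^{\star}_j=m^{\star}(j)-1=LWF(j)-j$, and $LWF(UWF(i))=h^{\star}(i)=2LWF(i)+i$ by (\ref{lwfuwf}). This gives $q^{\star}_j=2LWF(i)+i-LWF(i)-i=LWF(i)$, the same result, valid for $j\geq 6$, i.e.\ for all $i\geq 3$, since $UWF(3)=8$.

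The main obstacle is that the identities (\ref{lwflwf}) and (\ref{lwfuwf}) were only partially proved in the paper, with the other cases omitted. If I could not rely on them, I would verify (\ref{lwflwf}) case by case according to whether $i\in\mathfrak F$ and the parity of its index, using (\ref{fifk}) and (\ref{sfifk}). In the non-Fibonacci case, the classical identity $a(a(i))=b(i)-1$ does the job, provided $a(i)\notin F$; that follows because, by (\ref{fifk}), Fibonacci values of $a$ arise only at Fibonacci arguments.
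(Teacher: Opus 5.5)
Your proof is correct, but it reaches the key evaluation $q^{\star}_{LWF(i)}=UWF(i)$ by a different route from the paper.

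The paper's proof is a one-liner. It reads off $q^{\star}_{a(t)+\epsilon(t)}=b(t)+\epsilon(t)$ and $q^{\star}_{b(t)+\epsilon(t)}=a(t)+\epsilon(t)$ from (\ref{atpebtpe}), which was established inside the proof of the almost-involution theorem. That proof uses Corollary~\ref{fqs} and explicit floor estimates such as (\ref{A1}) and (\ref{A2}), split by whether $t$ is a Fibonacci number and by the parity of its index. The paper then only subtracts, using $b(t)-a(t)=t$, exactly as you do at the end.

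You instead combine part (\ref{qj}) of Theorem~\ref{propqsj} with the composition identities (\ref{lwflwf}) and (\ref{lwfuwf}) from the proof of Lemma~\ref{ts1ts2ts3p}, so no floor estimates are needed. In particular, your ``alternative'' argument for the second identity gives an independent, floor-free derivation of (\ref{atpebtpe}). That in turn yields the almost-involution theorem itself, showing that the swap property is already encoded in the identities behind the partition $T^{\star}_1,T^{\star}_2,T^{\star}_3$.

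Your first argument for the second identity, via the involution theorem, falls back on the paper's floor-estimate machinery. So only the (\ref{lwfuwf}) variant fully justifies your opening claim of avoiding floor estimates.

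The two routes rely on different omitted details. Yours depends on the unproved cases of (\ref{lwflwf}) and (\ref{lwfuwf}), as you note. The paper's depends on the omitted Case~3 of the involution theorem. You also need the separate check $q^{\star}_5=8$ for $i=3$, since part (\ref{qj}) only applies for $j\geq 6$; you handled this correctly.
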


\begin{proof} By (\ref{atpebtpe}), $$q^{\star}_{a(t)+\epsilon (t)}=b(t)+\epsilon (t),$$ and $$b(t)+\epsilon(t)-(a(t)+\epsilon (t))=b(t)-a(t)=t,$$ and this proves the first identity. The second one is proved in a similar way, and we will omit it.
\end{proof}

\begin{cor} The sequence $q^{\star}_j-j$ contains every integer once.
\end{cor}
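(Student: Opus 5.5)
We need to show that every integer occurs exactly once in $q^{\star}_j-j$, $j\geq 0$. The plan is to handle large $j$ with the preceding theorem, compute the small $j$ directly, and then check that the two ranges fit together without gaps or overlaps.

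\textbf{Large indices.} By Theorem \ref{asbsp}, every $j\geq 5$ lies in exactly one of $\{LWF(i)\mid i\geq 3\}$ or $\{UWF(i)\mid i\geq 3\}$. This uses $LWF(1)=1$, $LWF(2)=3$, $UWF(1)=2$, $UWF(2)=4$, which account for exactly $\{1,2,3,4\}$. The preceding theorem then gives the value of $q^{\star}_j-j$:
\begin{itemize}
\item it equals $i$ if $j=LWF(i)$;
\item it equals $-i$ if $j=UWF(i)$.
\end{itemize}
Both $LWF$ and $UWF$ are injective; for $LWF$ this follows from Lemma \ref{mjmmmo} (strictly increasing for $n\geq 4$) together with the first few values. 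Hence, as $j$ runs over $\mathbb Z^{\geq 5}$, the differences $q^{\star}_j-j$ run bijectively over $\{i\in\mathbb Z\mid |i|\geq 3\}$. Each such $i$ is attained exactly once.

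\textbf{Small indices.} For $0\leq j\leq 4$, use the explicit values $L(0),\dots,L(4)=0,3,1,4,2$. The differences are
$$0-0=0,\quad 3-1=2,\quad 1-2=-1,\quad 4-3=1,\quad 2-4=-2.$$
These are exactly $\{-2,-1,0,1,2\}$, each attained once.

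\textbf{Combining.} The two sets $\{-2,\dots,2\}$ and $\{i\mid |i|\geq 3\}$ are disjoint and together give all of $\mathbb Z$, so every integer appears exactly once in $q^{\star}_j-j$.

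The only delicate point is making sure the index ranges match. Theorem \ref{asbsp} is exactly what guarantees that the indices $i\geq 3$ cover all $j\geq 5$ with nothing left over or repeated. Once that is in place, the argument is bookkeeping.
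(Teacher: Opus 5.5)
Your proof is correct and follows the paper's argument: compute $q^{\star}_j-j$ directly for $0\leq j\leq 4$ to get $\{-2,\dots,2\}$, then use the preceding theorem with the partition of $\mathbb Z^{\geq 5}$ by $\{LWF(i)\}_{i\geq 3}$ and $\{UWF(i)\}_{i\geq 3}$ to hit each integer of absolute value at least $3$ exactly once. You spell out bookkeeping the paper leaves implicit; the injectivity of $UWF$ you assert without proof follows from $UWF(n)=LWF(n)+n$ for $n\geq 3$, and injectivity is in any case automatic, since $LWF(i)=LWF(i')=j$ forces $i=q^{\star}_j-j=i'$.
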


\begin{proof} It holds that $$q^{\star}_0-0=0, q^{\star}_1-1=2, q^{\star}_2-2=-1, q^{\star}_3-3=1, q^{\star}_4-4=-2$$ and, for $j\geq 5$, by the previous theorem, every integer with absolute value at least $3$ appears once.
\end{proof}

In a similar way as happened with $q_n$ and also with Shapovalov's and Venkatachala's sequence, an interpretation can be given for $q^{\star}_j$ as a greedy perturbed sequence in which the arithmetic mean of its terms are always integer numbers:

\begin{theo} For $j\geq 3$, $q^{\star}_j$ is the least non-negative integer different from $q^{\star}_0,\dots,q^{\star}_{j-1}$ for which the arithmetic mean $\frac{q^{\star}_0+\dots+q^{\star}_j}{j+1}$ takes an integer value.
\end{theo}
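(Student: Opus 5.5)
Put $\Sigma_j=q^{\star}_0+\dots+q^{\star}_j$. The plan is to first find a closed form for $\Sigma_j$ that makes divisibility by $j+1$ visible, and then check the two conditions that make $q^{\star}_j$ the least admissible value.

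\textbf{Closed form for the partial sums.} Write $q^{\star}_k=k+d_k$, where $d_k=q^{\star}_k-k$. The preceding theorem and its corollary give the differences: $d_{LWF(i)}=i$ and $d_{UWF(i)}=-i$ for $i\geq 3$, together with the small values $d_0=0,d_1=2,d_2=-1,d_3=1,d_4=-2$. The structure of these differences suggests a formula of the shape
\begin{equation*}
\Sigma_j=(j+1)\,c_j,
\end{equation*}
where $c_j$ is an explicit integer, presumably close to $j/2$ plus a correction. To guess $c_j$, I will compute it from the listed values. The partial sums $0,3,4,8,10,18,28,40,45,60,\dots$ divided by $j+1$ give $0,\tfrac32,\dots$. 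This already fails at $j=1$, which is consistent with the statement starting at $j\geq 3$. For $j\geq 3$ one gets $2,2,3,4,5,5,6,6,\dots$.

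This indicates that $c_j$ equals the smaller of $q^{\star}_{j+1}$ and $n^{\star}(j+1)$-type quantities. More concretely, I will aim to prove
\begin{equation*}
\Sigma_j=(j+1)\,\bigl(n^{\star}(j+1)+1\bigr)\quad\text{or a similar expression in terms of }m^{\star}(j+1).
\end{equation*}
I would establish this by induction on $j$, using parts (1)--(4) of Theorem \ref{propqsj} and Lemma \ref{mjmmmo} for the increments of $m^{\star}$. The inductive step reduces to checking that $q^{\star}_j=(j+1)c_j-j\,c_{j-1}$, split according to whether $j\in T^{\star}_1$, $T^{\star}_2$ or $T^{\star}_3$.

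\textbf{Verifying minimality.} Given $\Sigma_{j-1}=j\,c_{j-1}$, the candidates $x$ are those with $x\equiv -j\,c_{j-1}\equiv c_{j-1}\pmod{j+1}$. So the admissible values are $c_{j-1}+t(j+1)$ with $t\ge0$. I then need two things.
\begin{itemize}
\item[(a)] If $j\in B^{\star}$, then $q^{\star}_j=c_{j-1}$, and this value is unused. This follows because Theorem \ref{propqsj}(\ref{qj}) gives $q^{\star}_j=m^{\star}(j)-1$, which is not in $U^{\star}_j$.
\item[(b)] If $j\in A^{\star}$, then $c_{j-1}$ is already used (it is at most $n^{\star}(j)$ by parts (\ref{mjm1}),(\ref{mj})), and $q^{\star}_j=M^{\star}(j)+1=c_{j-1}+(j+1)$.
\end{itemize}
Together, (a) and (b) give the claim. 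Part (b) amounts to the identity $M^{\star}(j)=m^{\star}(j)+j-1$, and hence $c_{j-1}=m^{\star}(j)-1$ in both cases. This pins down the conjecture as
\begin{equation*}
\Sigma_{j-1}=j\,\bigl(m^{\star}(j)-1\bigr).
\end{equation*}
It checks numerically: $j=5$ gives $5\cdot 2=10$, and $j=6$ gives $6\cdot 3=18$.

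\textbf{Main obstacle.} The hard step is the inductive verification of $\Sigma_{j-1}=j(m^{\star}(j)-1)$. Since $\Sigma_j-\Sigma_{j-1}=q^{\star}_j$, the step needs
\begin{equation*}
(j+1)\bigl(m^{\star}(j+1)-1\bigr)-j\bigl(m^{\star}(j)-1\bigr)=q^{\star}_j .
\end{equation*}
By Lemma \ref{mjmmmo}, $m^{\star}(j+1)-m^{\star}(j)$ is $1$ or $0$ according to whether $j\in A^{\star}$ or $j\in B^{\star}$. The left side is therefore $m^{\star}(j)-1+(j+1)$ or $m^{\star}(j)-1$, respectively. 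These match Theorem \ref{propqsj}(\ref{qj}), since $M^{\star}(j)+1=m^{\star}(j)+j$.

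The one genuine care point is the small indices. Lemma \ref{mjmmmo} only covers $j\geq5$, and Theorem \ref{propqsj}(\ref{qj}) only covers $j\geq 6$. So the base cases $j=3,4,5$ (and the starting value $\Sigma_2=4=3\cdot(m^{\star}(3)-1)$ with $m^{\star}(3)=3$) must be checked directly from the listed values, together with the minimality check for $j=3,4,5$.
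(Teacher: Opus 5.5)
There is a genuine gap, though an easily repaired one: the base case of your induction is false. You claim $\Sigma_2=4=3\,(m^{\star}(3)-1)$ with $m^{\star}(3)=3$. In fact $3\,(m^{\star}(3)-1)=6\neq 4$, and your own table already shows $\Sigma_2/3=4/3\notin\mathbb Z$. So the identity $\Sigma_{j-1}=j\,(m^{\star}(j)-1)$ fails at $j=3$, and the induction cannot pass through $j=3$ for two further reasons:
\begin{itemize}
\item the formula of part~(6) of Theorem~\ref{propqsj} fails there, since $3\in A^{\star}$ but $q^{\star}_3=4\neq M^{\star}(3)+1=6$;
\item the formula of Lemma~\ref{mjmmmo} fails at $j=4$, since $M^{\star}(4)-M^{\star}(3)=1$ although $3\in A^{\star}$.
\end{itemize}
Consequently, at $j=3$ the admissible class is $x\equiv 0\pmod 4$, not $x\equiv m^{\star}(3)-1=2\pmod 4$, and your cases (a)/(b) do not apply there.

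The fix is to start the induction at $\Sigma_3=8=4\,(m^{\star}(4)-1)$. The step from $j$ to $j+1$ uses Lemma~\ref{mjmmmo} at index $j+1\geq 5$, which is fine for $j\geq 4$. It also uses part~(6) at $j$, which you check by hand for $j=4,5$ ($q^{\star}_4=2=m^{\star}(4)-1$ and $q^{\star}_5=8=M^{\star}(5)+1$) and take from the theorem for $j\geq 6$. The case $j=3$ must then be done entirely by hand: $\Sigma_2=4$, so you need $x\equiv 0\pmod 4$ with $x\notin\{0,3,1\}$, which gives $x=4=q^{\star}_3$.

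With that repair your proof is correct. It differs from the paper only in how the sum identity is obtained. The paper does not induct. For each $j\geq 6$ it sums both sides of part~(1), $\{q^{\star}_k-k+j\mid 0\le k<j\}=\{m^{\star}(j),\dots,M^{\star}(j)\}$, which yields $\Sigma_{j-1}=j\,(m^{\star}(j)-1)$ in one line, and it treats $3\le j\le 5$ by hand. Your induction replaces part~(1) by Lemma~\ref{mjmmmo} together with part~(6). It is equally valid, but unlike the paper's index-by-index summation it depends on getting the starting index right.

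Your minimality step matches the paper's, with one small difference. For $j\in B^{\star}$ you use that $q^{\star}_j\notin U^{\star}_j$ by construction of the mex, where the paper uses part~(2); either works. Both arguments tacitly need $0\le m^{\star}(j)-1=LWF(j)-j\le j$, so that $m^{\star}(j)-1$ is the least non-negative member of its residue class modulo $j+1$; this holds for $j\geq 3$.
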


\begin{proof} It can be easily checked that the theorem is true for $3\leq j\leq 5$. So, let us suppose that $j\geq 6$. By part (1) of Theorem \ref{propqsj}, $$\{q^{\star}_0+j,q^{\star}_1+j-1,\dots,q^{\star}_{j-1}+1\}=\{m^{\star}(j),m^{\star}(j)+1,\dots,m^{\star}(j)+j-1\}.$$ Since both sets are equal, also is the sum of their elements, that is, $$q^{\star}_0+q^{\star}_1+\dots+q^{\star}_{j-1}+\frac{j(j+1)}{2}=j m^{\star}(j)+\frac{j(j-1)}{2},$$ and therefore $$q^{\star}_0+q^{\star}_1+\dots+q^{\star}_{j-1}=j(m^{\star}(j)-1).$$ If we denote now by $x_j$ the minimum value distinct from $q^{\star}_0,\dots,q^{\star}_{j-1}$ for which $$\frac{q^{\star}_0+\dots+q^{\star}_{j-1}+x_j}{j+1}\in\mathbb Z$$ then, since $j(m^{\star}(j)-1)\equiv 1-m^{\star}(j)\pmod{j+1}$, we obtain that the condition that the mean is an integer is equivalent to \begin{equation}\label{cmint}x_j\equiv m^{\star}(j)-1\pmod{j+1}.\end{equation} Now we will distinguish two cases:

Case $1$: If $j\in B^{\star}=T^{\star}_1$ then, by part (2) of Theorem \ref{propqsj}, $n^{\star}(j)=m^{\star}(j)-2$, but then $\{0,1,\dots,m^{\star}(j)-2\}\subseteq U^{\star}_j$ and $m^{\star}(j)-1\not\in U^{\star}_j$, and the least $x_j$ satisfying (\ref{cmint}) and different from $q^{\star}_0,\dots,q^{\star}_{j-1}$ is $m^{\star}(j)-1$.

Also, by part (6) of Theorem \ref{propqsj}, $q^{\star}_j=m^{\star}(j)-1$.

Case 2: If $j\in A^{\star}=T^{\star}_2\cup T^{\star}_3$ then, by parts (3) and (4) of Theorem \ref{propqsj}, $n^{\star}(j)\geq m^{\star}(j)-1$, and therefore $m^{\star}(j)-1\in U^{\star}_j$, and the least $x_j$ different from $q^{\star}_0,\dots,q^{\star}_{j-1}$ satisfying (\ref{cmint}) is $m^{\star}(j)+j$.

Again, by part (6) of Theorem \ref{propqsj}, $q^{\star}_j=M^{\star}(j)+1=m^{\star}(j)+j$.
\end{proof}

\section{Future research}

Using Algorithm \ref{alg} it is very hard to obtain closed formulas for the values in the infinite matrix obtained. The main difficulty is the complex structure of the set $S\cap\mathbb Z^{\geq 0}$, because of the abundance of holes in that set, which makes difficult to obtain explicitely the values of the mex. Nonetheless, when doing compatible perturbations in the initial values, for the third row the set seems to have a nice structure in which, for large $j$, it is eventually either a full interval of integers or has a single hole. We think that it would be interesting doing perturbations in initial segments of this third row, preserving in the perturbed values the property that there are no repetitions in the $q_j$ values and in the $q_j-j$ values.

\end{document}